\newcolumntype{2}{D{.}{}{2.0}}
\numberwithin{equation}{section}
    \def\<{{\langle}}
    \def\>{{\rangle}}
    \def\note#1{{}}
    \def\note#1{}
    \def\beq{\begin{equation}}
    \def\eeq{\end{equation}}
    \def \eE{{\eps_\cE}}
    \def\cten#1{\raise-.2cm\hbox{$\stackrel{\displaystyle\square}
{\scriptscriptstyle{#1}}$}}
    \def\CC{\mathbb{C}}
      \def\ZZ{\mathbb{Z}}
        \def\eE{\mathcal{E}}
  \def\QQ{\mathbb{Q}}
     \def\FF{\mathbb{F}}
     \def\PP{\mathbb{P}}
     \newcounter{zlist}
  \newcounter{blist}
\newtheorem{proposition}{Proposition}[section]
\newtheorem{lemma}[proposition]{Lemma}
\newtheorem{theorem}[proposition]{Theorem}
\theoremstyle{definition}
\newtheorem{example}[proposition]{Example}
\theoremstyle{remark} 
\newtheorem{remark}[proposition]{Remark}
\begin{document}
\title[{On the algebra of elliptic curves}]{On the algebra of elliptic curves}

   \author[Brzezi\'nski]{Tomasz Brzezi\'nski}
\address{
Department of Mathematics, Swansea University, 
Swansea University Bay Campus,
Fabian Way,
Swansea,
  Swansea SA1 8EN, U.K.\ \newline \indent
Faculty of Mathematics, University of Bia{\l}ystok, K.\ Cio{\l}kowskiego  1M,
15-245 Bia\-{\l}ys\-tok, Poland}
\email{T.Brzezinski@swansea.ac.uk}

     \date{\today}
  \subjclass{14H52; 20N10; 16Y99;  08A99}
  \keywords{Elliptic curve; heap; truss}
   \begin{abstract}
 It is argued that a nonsingular elliptic curve admits a natural or fundamental abelian heap structure uniquely determined by the curve itself. It is shown that the set of complex analytic or rational functions from a nonsingular elliptic curve to itself is a truss arising from endomorphisms of this heap.
   \end{abstract}
   \maketitle

\section{Introduction}
It is well known that a nonsingular complex elliptic curve $\eE: y^2 = 4x^3-g_2x-g_3$ has a natural additive group structure. On the one hand this structure can be understood as arising from the identification of $\eE$ as the quotient $\CC/\Lambda(\omega_1,\omega_2)$, where $\Lambda (\omega_1,\omega_2)= \ZZ\omega_1  +\ZZ\omega_2$, $\Im(\omega_1/\omega_2)>0$ is the lattice of periods: Since $\Lambda(\omega_1,\omega_2)$ is an additive subgroup of $\CC$, the quotient inherits the addition from that of complex numbers. On the other hand the addition can be defined geometrically as follows (see e.g.\ \cite[Section~III.2]{Sil:ari} or \cite[pp.~12--14]{Hus:ell}, where this addition is very suggestively called a {\em chord-tangent law}). By the B\'ezout theorem, a line intersects $\eE$ in three points (counted with multiplicities). Thus a line through points $A$ and $B$ on $\eE$ intersects the curve in the third point which is declared to be $-(A+B)$. Reflecting this point through the $x$-axis we obtain another point of $\eE$ that gives $A+B$.  If the line happens to be tangent to, say, $A$ then $A+B=-A$, while for a point $A$ of multiplicity three, $A+A=-A$. Both constructions make a choice of the neutral point for this operation. While it might be clear why the zero complex number should be the zero of the induced operation (after all zero plays a special role in the usual arithmetic of complex numbers), why the point in infinity on the curve should have this privileged position might not be so transparent; when the curve is embedded in the projective plane $\CC\PP^2$ the point $[0:1:0]$ is no different for any others. Of course, the geometric construction can be repeated by fixing any point as the zero of the operation (the drawing of lines  and description of intersections become a bit more complicated then, see e.g.\ \cite[p.~14]{Hus:ell} or \cite[Section~5.7]{NivZuc:int}), but the fact that a choice of this point has to be made in the first place raises a question of dependence of the structure on this choice rather than the curve alone.  In this note we argue that it is more natural to consider a ternary algebraic structure on an elliptic curve and first liberate oneself from making any choices of special points, and second interpret holomorphic (or rational in the case of a general field) endomorphisms\footnote{By the term {\em endomorphism of an elliptic curve} we mean an analytic in the complex and rational in the general case function from the curve to itself without requesting preservation of any points. For those that preserve a distinguished point we use the term {\em isogeny} as in \cite[Chapter~III.4]{Sil:ari}.} of the curve as special endomorphisms of this structure that combine together into an object with two operations similar to albeit substantially different from a ring. In this way one can deal with {\bf all} endomorphisms of an elliptic curve not only those that fix an arbitrarily chosen point (isogenies).

\section{The result}\label{sec.result}
Given three points $A,B,C$ on an elliptic curve $\eE$ we determine the fourth point $[A,B,C]$ as follows. Suppose that the line through $A$ and $C$ intersects the curve at a point $D\neq B$. Then $[A,B,C]$ is the point of intersection of $\eE$ with the line through $D$ and $B$ (see Figure 1). If it happens that $D=B$, then $[A,B,C]$ is the intersection point of the line tangent to $B$ with the curve $\eE$. If $A=C$, then $D$ is the intersection point of the line tangent to $\eE$ at $A$ with $\eE$. If $B=C$ then this construction gives $[A,C,C] = A$. Similarly, if  $B=A$ then $[A,A,C] =C$. 
\begin{figure}
\includegraphics[scale=.4]{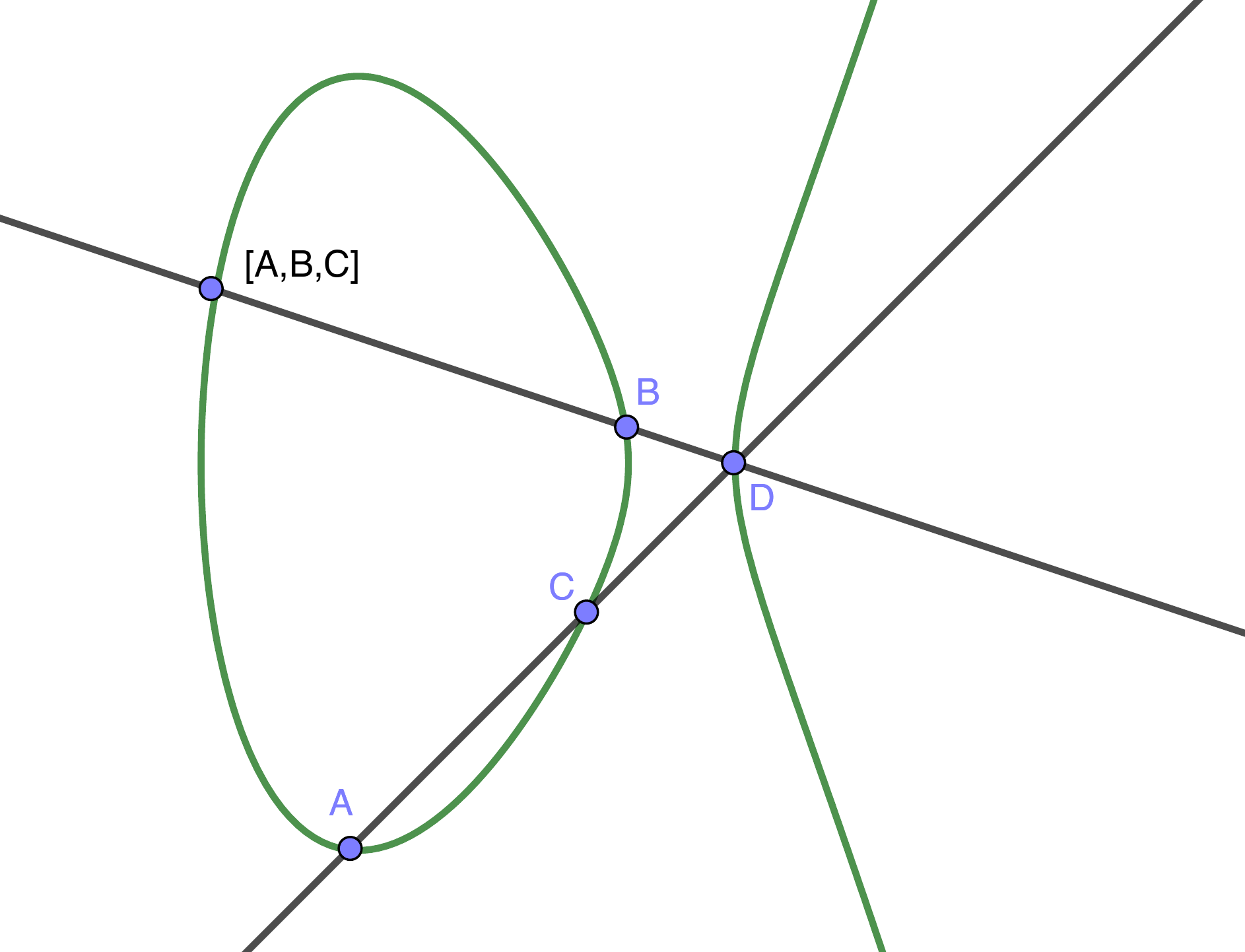}
\caption{Construction of the heap operation}
\end{figure}

The just described operation $(A,B,C)\longmapsto [A,B,C]$ defines an {\em abelian heap} structure on $\eE$ (see \cite{Pru:the}, \cite{Bae:ein}). That is, for all points $A,B,C,D,E$ on $\eE$, 
$$
\begin{aligned} 
{}[[A,B,C],D,E]&=[A,B,[C,D,E]],\\
 [A,B,B] = [B,B,A] =A \;\; &\mbox{and}\;\; [A,B,C]=[C,B,A].
 \end{aligned}
 $$
  We denote this heap by $H(\eE)$.

This geometric construction can also be expressed either analytically or purely algebraically.  Let  $\eE = \CC/\Lambda(\omega_1,\omega_2)$ and let $\wp$ be the Weierstrass function associated with the lattice $\Lambda(\omega_1,\omega_2)$. Take any  $A,B,C \in \eE$ and  view them in $\CC^2$ as  $A=(\wp(a),\wp'(a))$, $B=(\wp(b),\wp'(b))$, $C=(\wp(c),\wp'(c))$, where $a,b,c\in \CC$. Choose $d\in \CC$ such that $a+c+d \in \Lambda(\omega_1,\omega_2)$. The corresponding point $D=(\wp(d),\wp'(d))\in \eE$ is collinear with $A$ and $C$. Let $\langle a,b,c\rangle $ denote any complex number  such that $b+d+\langle a,b,c\rangle \in \Lambda(\omega_1,\omega_2)$. Then
$$
[A,B,C] = (\wp(\langle a,b,c\rangle),\wp'(\langle a,b,c\rangle)).
$$ 
From the algebraic point of view since $\Lambda(\omega_1,\omega_2)$ is a subgroup of the additive group of $\CC$ (and hence a sub-heap of the associated heap),   writing $A:= [a]\in \CC/\Lambda(\omega_1,\omega_2)$ for the class of $a\in \CC$ etc., we find
$$
[A,B,C] = [{a-b+c}].
$$

Note that although analytically number $\langle a,b,c\rangle \in \CC$ is not defined uniquely, algebraically $[a,b,c] := a-b+c$ is. Hence $H(\eE)$ is a {\em bona fide} quotient of the heap structure on the additive group of $\CC$.

The following lemma is a standard result in the theory of elliptic curves (see e.g.\ \cite[Proposition~IV~4.18]{Har:alg} or \cite[Chapter~I~Theorem~4.1]{Lan:ell}).

\begin{lemma}\label{lem.hol}
All (analytic) endomorphisms of a nonsingular elliptic curve $\eE:=\CC/\Lambda(\omega_1,\omega_2)$ are quotients of analytic functions
$$
f_{a,b}: \CC\longrightarrow \CC, \qquad z\longmapsto az+b,
$$
where $a,b\in \CC$ and $a\Lambda(\omega_1,\omega_2)\subseteq \Lambda(\omega_1,\omega_2)$.
\end{lemma}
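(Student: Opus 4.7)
The plan is to use the fact that $\CC$ is the universal covering space of the torus $\eE = \CC/\Lambda(\omega_1,\omega_2)$, and that every analytic map from $\eE$ to itself lifts to an analytic map of $\CC$. So let $\phi : \eE \to \eE$ be an analytic endomorphism, write $\pi: \CC \to \eE$ for the quotient, and seek a lift $F: \CC \to \CC$ with $\pi\circ F = \phi\circ\pi$. Such an $F$ exists because $\phi\circ\pi: \CC \to \eE$ is analytic and $\CC$ is simply connected, so the standard covering-space lifting theorem applies; $F$ is automatically analytic since $\pi$ is a local biholomorphism.

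Next I would analyse the interaction of $F$ with $\Lambda := \Lambda(\omega_1,\omega_2)$. For every $\lambda \in \Lambda$ and every $z\in\CC$,
$$
\pi(F(z+\lambda)) = \phi(\pi(z+\lambda)) = \phi(\pi(z)) = \pi(F(z)),
$$
so $F(z+\lambda) - F(z) \in \Lambda$. Since $\Lambda$ is discrete and $z \mapsto F(z+\lambda) - F(z)$ is continuous in $z$, this difference is a constant $g(\lambda)\in\Lambda$. Differentiating in $z$ yields $F'(z+\lambda)=F'(z)$ for all $\lambda\in\Lambda$, so the entire holomorphic function $F'$ descends to a holomorphic function on the compact Riemann surface $\eE$.

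Any holomorphic function on a compact connected Riemann surface is constant (equivalently, $F'$ is holomorphic and $\Lambda$-periodic on $\CC$, hence bounded by the compactness of a fundamental parallelogram, hence constant by Liouville). Thus $F'(z) = a$ for some $a\in\CC$, and integrating gives $F(z) = az + b$ for some $b\in\CC$; i.e.\ $F = f_{a,b}$. The compatibility with $\Lambda$ that we already derived, $f_{a,b}(z+\lambda) - f_{a,b}(z) = a\lambda \in \Lambda$ for all $\lambda\in\Lambda$, is precisely the condition $a\Lambda \subseteq \Lambda$. Conversely, any $f_{a,b}$ satisfying this condition evidently descends to an analytic endomorphism of $\eE$.

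The only genuine subtlety is the lifting step — one needs $\phi\circ\pi$ to factor through the universal cover, which is immediate from simple connectedness of $\CC$; the rest is the standard Liouville/periodicity argument. Everything else is bookkeeping about how constants of integration translate into the lattice condition $a\Lambda\subseteq\Lambda$.
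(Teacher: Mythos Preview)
Your proof is correct and follows essentially the same route as the paper's: lift to $\CC$, observe that $F(z+\lambda)-F(z)$ lands in the discrete set $\Lambda$ and is therefore constant, conclude $F'$ is $\Lambda$-periodic and hence bounded on a fundamental parallelogram, then apply Liouville to get $F(z)=az+b$ with $a\Lambda\subseteq\Lambda$. The only difference is presentational: you spell out the covering-space lifting step explicitly (simple connectedness of $\CC$, local biholomorphism), while the paper simply asserts the correspondence between endomorphisms of $\eE$ and suitably periodic analytic functions on $\CC$.
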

\begin{proof}
Any analytic function on $\eE$ corresponds to an analytic function $f:\CC\to\CC$ with periods $\omega_1$ and $\omega_2$ modulo $\Lambda(\omega_1,\omega_2)$. Thus,  for all $\alpha\in \Lambda(\omega_1,\omega_2)$, there is an analytic (hence continuous) function
$$
f_\alpha: \CC \longrightarrow \Lambda(\omega_1,\omega_2), \qquad z \longmapsto  f(z+\alpha) - f(z).
$$
Since $\Lambda(\omega_1,\omega_2)$ is discrete, $f_\alpha$ is constant, which implies that 
$$
f'(z+\alpha) = f'(z), \qquad \mbox{for all $\alpha\in \Lambda(\omega_1,\omega_2)$}.
$$
In other words the derivative $f'$ is a doubly-periodic function on $\CC$, and hence it is fully determined by its values on the fundamental parallelogram of $\Lambda(\omega_1,\omega_2)$ with vertices, say, $0$, $\omega_1$, $\omega_2$, $\omega_1+\omega_2$. Since the latter is compact, $f'$ is bounded and thus by Liouville's theorem $f'(z)=a$, for some $a\in \CC$. Therefore, $f(z) =az +b$, for some $a,b\in \CC$. The constraints $f(z+\alpha) - f(z)\in \Lambda(\omega_1,\omega_2)$ yield $a\alpha \in \Lambda(\omega_1,\omega_2)$, for all $\alpha \in \Lambda(\omega_1,\omega_2)$ as stated.
\end{proof}

Recall from \cite{Brz:tru}, \cite{Brz:par} that a {\em truss} is an abelian heap $T$ together with an associative multiplication denoted by juxtposition that distributes over the ternary heap operation $[-,-,-]$, that is, for all $a,b,c,d\in T$,
$$
a[b,c,d] = [ab,ac,ad], \qquad [a,b,c]d = [ad,bd,cd].
$$
The main result of this note is contained in the following 
\begin{theorem}\label{thm.main}
Let $\eE=\CC/\Lambda(\omega_1,\omega_2)$ be a nonsingular elliptic curve. Endomorphisms of $\eE$ are endomorphisms of the heap $H(\eE)$. Consequently the set of all endomorphisms of $\eE$ forms a (unital) truss $T(\eE)$ with the ternary structure inherited from that of $H(\eE)$, that is,
$$
[f,g,h](A) = [f(A),g(A),h(A)], \qquad \mbox{for all $A\in \eE$},
$$
and with the multiplication given by composition.
\end{theorem}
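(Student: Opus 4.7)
The plan is to leverage Lemma~\ref{lem.hol} to reduce every endomorphism of $\eE$ to the affine form $f_{a,b}: z\mapsto az+b$ with $a\Lambda(\omega_1,\omega_2)\subseteq\Lambda(\omega_1,\omega_2)$, and then to exploit the purely algebraic description $[A,B,C] = [a-b+c]$ of the heap operation on $H(\eE) = \CC/\Lambda(\omega_1,\omega_2)$ recorded earlier.

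For the first assertion I would simply verify that any such $f_{a,b}$ preserves affine combinations modulo the lattice: writing $A=[x]$, $B=[y]$, $C=[z]$,
\[
f_{a,b}([A,B,C]) = [a(x-y+z)+b] = [(ax+b)-(ay+b)+(az+b)] = [f_{a,b}(A),f_{a,b}(B),f_{a,b}(C)].
\]
Thus $f_{a,b}$, and hence by Lemma~\ref{lem.hol} every endomorphism of $\eE$, is automatically a heap endomorphism of $H(\eE)$.

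For the truss structure I would first check that $T(\eE)$ is closed under the pointwise ternary operation. If $f$, $g$, $h$ lift to $f_{a_i,b_i}$ with $a_i\Lambda(\omega_1,\omega_2)\subseteq\Lambda(\omega_1,\omega_2)$, then pointwise on $\CC$
\[
[f,g,h](z) = (a_1-a_2+a_3)z + (b_1-b_2+b_3),
\]
which is again affine, and $(a_1-a_2+a_3)\Lambda(\omega_1,\omega_2)\subseteq\Lambda(\omega_1,\omega_2)$ since $\Lambda(\omega_1,\omega_2)$ is an additive subgroup of $\CC$. The abelian heap axioms for $T(\eE)$ then follow pointwise from those of $H(\eE)$. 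Closure under composition and associativity with unit $f_{1,0}$ are immediate from the same lifting.

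It remains to check the two distributive laws relating composition to the pointwise heap. Right distributivity $[f,g,h]\circ k = [f\circ k,g\circ k,h\circ k]$ holds tautologically from the definition of the pointwise operation. Left distributivity $f\circ[g,h,k] = [f\circ g,f\circ h,f\circ k]$ is the place where the first assertion is actually used: evaluated at $A\in\eE$,
\[
f([g(A),h(A),k(A)]) = [f(g(A)),f(h(A)),f(k(A))]
\]
precisely because $f$ is a heap endomorphism of $H(\eE)$. There is no genuine obstacle here; Lemma~\ref{lem.hol} does the substantive work, and the rest is a clean translation between the algebraic and heap-theoretic descriptions of $\eE$.
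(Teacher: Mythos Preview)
Your proposal is correct and follows essentially the same approach as the paper: reduce to the affine lifts $f_{a,b}$ via Lemma~\ref{lem.hol}, verify that these preserve $z-z'+z''$ and hence descend to heap endomorphisms, and then observe that right distributivity is tautological while left distributivity is exactly the heap-morphism property. If anything, you are slightly more explicit than the paper in checking closure of $T(\eE)$ under the pointwise ternary operation, but the underlying argument is the same.
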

\begin{proof}
By Lemma~\ref{lem.hol} every analytic endomorphism of $\eE$ arises as the quotient of $f_{a,b}(z) = az+b$, $a\Lambda(\omega_1,\omega_2) \subseteq \Lambda(\omega_1,\omega_2)$. Each of these functions is an endomorphism of the heap of $\CC$, i.e.\
$$
\begin{aligned}
f_{a,b}([z,z',z'']) &= f(z-z'+z'') \\
&= (az + b) - (az' + b)+ (az'' + b) = [f_{a,b} (z),f_{a,b}(z'),f_{a,b}(z'')].
\end{aligned}
$$
Since $\Lambda$ is an abelian subgroup and hence also a sub-heap of $\CC$, the functions $f_{a,b}$ descend to endomorphisms of $H(\eE)$ and thus they inherit the structure of a heap as described. 

Obviously, the composition of two endomorphisms of $\eE$ is again an endomorphism. Explicitly, if $f$ corresponds to $f_{a,b}$ and $g$ corresponds to $f_{a',b'}$, then $f\circ g$ corresponds to 
$f_{aa', ab'+b}$.  By the definition of the heap structure $[-,-,-]$ on endomorphisms of $H(\eE)$, the composition right-distributes over $[-,-,-]$ and it left distributes by the fact that analytic endomorphisms of $\eE$ preserve the heap operation of $H(\eE)$.
\end{proof}

\begin{remark}\label{rem.noring}
It might be worth pointing out that the truss $T(\eE)$ is not arising from a ring. Any abelian heap can be converted to an abelian group by retracting it at the middle term in the ternary operation (see e.g.\ \cite[Section~2.2]{Brz:par}). Fixing different points on a curve leads to different albeit isomorphic groups, with the isomorphism provided by  translation.\footnote{The reader interested in the explicit description of this isomorphism in the case of elliptic curves might like to consult \cite[Theorem~5.22]{NivZuc:int}.}  As explained in \cite[Lemma~3.9]{Brz:par} to retract $T(\eE)$ into a ring one would need to have morphism $\Theta:\eE\to \eE$ such that $f\circ \Theta = \Theta \circ f = \Theta$ for all $f\in T(\eE)$. The condition $\Theta \circ f = \Theta$ implies that $\Theta$ must be a constant function, say $\Theta: A\mapsto O$ for a fixed point $O\in \eE$. 
The condition $f\circ \Theta =  \Theta$ now implies that $f(O) = O$, for all morphisms $f$ of $\eE$. This is not possible, as taking a morphism $f$ corresponding to $f_{0,b}$, where $b\not\in O$  or equivalently $O\neq (\wp(b), \wp'(b))$, one immediately obtains that $f(O)\neq O$. 

We can retract the ternary heap operation on $\eE$ at $O$ to the abelian group operation $A+B = [A,O,B]$, for which $O$ is the neutral element. Then $-A = [O,A,O]$ and $[A,B,C] = A-B+C$. The induced group structure on the set of endomorphisms has $\Theta$ as the neutral element. The composition of endomorphisms right distributes over this addition, but the truss left distributive law yields, for all endomorphisms $f,g,h$ and $A\in \eE$,
$$
f\circ (g+h)(A) = f\circ (g -\Theta+h)(A) =  f\circ g(A) - f(O) + f\circ h(A), 
$$
and thus $T(\eE)$ with this (or any other for that matter) addition is not a ring (unless $f(O) =O$ for all endomorphisms $f$ of $\eE$, which as argued above cannot be the case). \hfill $\triangle$
\end{remark}

\begin{remark}\label{rem.ring}
Notwithstanding Remark~\ref{rem.noring}, as explained in \cite[Theorem~4.3]{AndBrz:ide}  any truss, and hence also $T(\eE)$, gives rise to a heap or family of isomorphic rings. For any $A\in \eE$, let $c_A $ denote the constant function $c_A: \eE\to \eE$, $B\mapsto A$ and let us fix a point $O\in \eE$. The abelian group $T(\eE)$ with addition  $+:=[-,c_O,-]$ admits the associative multiplication 
$$
f\bullet g = f\circ g - c_{f(O)},
$$
that distributes over the addition, and hence makes the set of all endomorphisms of $\eE$ a (non-unital) ring. It is clear, however, that this conversion of a truss into a ring requires one to make a choice of an element of the curve. \hfill $\triangle$
\end{remark}

\begin{remark}\label{rem.cross}
In view of the product of endomorphisms described in the proof of Theorem~\ref{thm.main}, $T(\eE)$ can be seen as the extension of a ring by a module \cite[Theorem~4.2]{BrzRyb:con}. Let $\eE = \CC/\Lambda(\omega_1,\omega_2)$ and let $R(\omega_1,\omega_2)$ be the ring of all complex numbers $r$ such that $r \Lambda(\omega_1,\omega_2) \subseteq \Lambda(\omega_1,\omega_2)$. Then $R(\omega_1,\omega_2)$ acts on the heap $H(\eE)$ by the (analytic) formula: for all $r\in R(\omega_1,\omega_2)$ and $A=(\wp(a),\wp'(a))$,
$$
rA := (\wp(ra),\wp'(ra)).\footnote{Thus, in particular, if $r \Lambda(\omega_1,\omega_2) =\Lambda(\omega_1,\omega_2)$, then $rA := (r^{-2}\wp(a),r^{-3}\wp'(a))$ by the homogeneity property of the Weiestrass $\wp$-function.}
$$
Since, for all $r\in R(\omega_1,\omega_2)$,  $r\Lambda(\omega_1,\omega_2)\subseteq\Lambda(\omega_1,\omega_2)$, multiplying by elements of $R(\omega_1,\omega_2)$ preserves the collinearity of points. In consequence, 
$$
r[A,B,C] = [rA,rB,rC], \qquad (r-s+t)A = [rA,sA,tA],
$$
for all $r,s,t\in R(\omega_1,\omega_2)$, $A,B,C \in \eE$. Therefore, $\eE$ is a module over the truss $T(R(\omega_1,\omega_2))$ \cite{Brz:par}, where  $T(R(\omega_1,\omega_2))$ has the same multiplication as that in  $R(\omega_1,\omega_2)$ and the induced abelian heap structure $[r,s,t] =r-s+t$.

Following  \cite[Theorem~4.2]{BrzRyb:con} we can now fix any $O = (\wp(o),\wp'(o))$, and define the truss $T(\omega_1,\omega_2)$ built on the set $R(\omega_1,\omega_2)\times \eE$ with the Cartesian product heap operation
$$
[(r,A),(s, B),(t,C)] = \Big(r-s+t, [A,B,C]\Big),
$$
and multiplication
$$
(r,A)(s,B) = (rs, [A,rO,rB]).
$$
In view of the proof of Theorem~\ref{thm.main}, $T(\eE)\cong T(\omega_1,\omega_2)$. 

Viewed algebraically, that is when the heap structure on $\eE$ arises from the quotient of $\CC$ by its subgroup and hence sub-heap $\Lambda(\omega_1,\omega_2)$, and fixing $O = [0] = \Lambda(\omega_1,\omega_2)$, the multiplication in $T(\omega_1,\omega_2)$ takes a simpler form
$$
(r,[a])(s,[b]) = (rs, [a+rb]). 
$$
\hfill $\triangle$
\end{remark}

Up to isomorphism complex elliptic curves can be parametrised by lattices  with periods $\omega_1=\tau, \omega_2=1$, in the following examples we will restrict to this case and write $\Lambda(\tau)$ for $\Lambda(\tau,1)=\ZZ\tau + \ZZ$, $R(\tau)$ for $R(\tau,1)$, and $T(\tau)$ for $T(\tau,1)$.

\begin{example}\label{ex.i}
In the case $\tau=i$, $R(i)$ is the ring  of Gaussian integers $\ZZ[i]$ and so it coincides with $\Lambda(i)$ (see  \cite[Example~IV~4.20.1]{Har:alg}). Thus the truss of endomorphisms of the curve $\eE = \CC/\Lambda(i)$  can be identified with $\ZZ\times\ZZ\times \eE$ with operations
$$
\Big[(m,n,[a]),(m',n',[a']),(m'',n'',[a''])\Big] = (m-m'+m'', n-n'+n'', [a-a'+a'']),
$$
$$
(m,n,[a],)(m',n',[a']) = (mm' - nn', mn'+nm', [(m+in)a' +a]).
$$
\hfill $\blacktriangle$
\end{example}

\begin{example}\label{ex.2i}
In the case $\tau=2i$, $R(2i)$  coincides with the subring of the ring of integers $\ZZ[i]$ of the quadratic field $\QQ(i)\subset \CC$ with conductor 2, that is 
$$
R(i) = \ZZ+2\ZZ[i]=\{m+2ni \; | m,n\in \ZZ\},
$$  
(see \cite[Example~IV~4.20.3]{Har:alg}). Thus $T(\CC/\Lambda(2i))$  can be identified with $\ZZ\times 2\ZZ\times \eE$ with the Cartesian product heap operations
 as in Example~\ref{ex.i}
 and multiplication
$$
\begin{aligned}
(m,2n,[a])(m',2n',[a']) &= \Big(mm'-4nn', 2(mn'+nm'), \left[(m+2ni)a' +a\right]\Big).
\end{aligned}
$$
\hfill $\blacktriangle$
\end{example} 

\begin{example}\label{ex.com}
Examples~\ref{ex.i}--\ref{ex.2i} describe curves with complex multiplication, i.e.\ such that $R(\tau)$ is strictly bigger than $\ZZ$. By \cite[Theorem~IV~4.19]{Har:alg} this is the case if and only if $\tau = p+q\sqrt{-d}$ where $p,q\in \QQ$ and $d$ a is positive integer, and then
$$
R(\tau) = \{m+n\tau \;|\; m,n\in \ZZ,\; \mbox{such that} \; 2np,n(p^2+dq^2)\in \ZZ\}\subseteq \Lambda(\tau).
$$
For all $p,q\in \QQ$ and positive integers $d$, let us define the additive subgroup (hence a sub-heap) of $\ZZ$,
$$
\ZZ(p,q,d) = \{n\in \ZZ\; |\; 2np\; \&\; n(p^2+dq^2)\in \ZZ\}.
$$
Then the multiplication in the truss
$$
T(\CC/\Lambda(p+q\sqrt{-d}))\cong \ZZ \times \ZZ(p,q,d)\times \CC/\Lambda(p+q\sqrt{-d})
$$
 comes out as
$$
\begin{aligned}
(m,n &,[a])(m',n',[a'])\\
&= \Big(mm'-nn'(p^2+dq^2), n+n'+2nn'p, 
\Big[\left(m+n\left(p+q\sqrt{-d}\right)\right)a' + a\Big]\Big).
\end{aligned}
$$
\hfill $\blacktriangle$
\end{example}

\begin{example}\label{ex.omega}
Let $\tau$ be the third root of unity with the positive imaginary part.  Since then $\tau = - \frac 12 + \frac12 \sqrt{-3}$, we find ourselves in the setup of Example~\ref{ex.com} with $p= - \frac 12$, $q= \frac12$ and $d=-3$. One easily computes that
 $$
\ZZ\Big( \frac 12, \frac12,-3\Big) = 2\ZZ.
$$
 Therefore, the truss of endomorphisms of the curve $\eE = \CC/\Lambda(\tau)$  comes out as $T(\tau) = \ZZ\times 2\ZZ\times \eE$ with multiplication
$$
\begin{aligned}
(m,2n,[a])(m',2n'&,[a']) \\
&= \Big(mm'+2nn', 2(n+n' - nn'), 
\Big[\left(m-n+ n\sqrt{-3}\right)a' + a\Big]\Big).
\end{aligned}
$$
\hfill $\blacktriangle$
\end{example} 

\section{In other fields}
Of course the construction of a truss of endomorphisms of an elliptic curve can be performed for the general non-complex case even though curves can be no longer identified with tori. Let $\FF$ be a (perfect) field and let $\eE$ be a smooth curve of genus one with a non-empty set of rational points $\eE(\FF)$\footnote{These are all the points of $\eE$ that are solutions in $\FF$ to a polynomial equation with coefficients from $\FF$.} (and hence an elliptic curve over $\FF$). By using the Riemann-Roch theorem and mapping $\eE$ into the projective plane $\PP^2$, $\eE$ can be represented by a cubic equation in the Weierstrass form. By the B\'ezout theorem every line through two points in $\eE$ crosses $\eE$ at the third point, and hence the geometric construction of the heap operation 
$$
[-,-,-] : \eE\times\eE\times\eE \longrightarrow \eE, \qquad (A,B,C)\longmapsto [A,B,C],
$$
described at the beginning of Section~\ref{sec.result} can be repeated verbatim, thus leading to the abelian heap $H(\eE)$.

\begin{lemma}\label{lem.gen}
Every endomorphism of an elliptic curve $\eE$ over $\FF$ is an endomorphism of the heap $H(\eE)$. Consequently, endomorphisms of $\eE$ form a truss $T(\eE)$.
\end{lemma}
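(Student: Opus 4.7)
The plan is to mirror the strategy of Theorem~\ref{thm.main} by reducing every endomorphism of $\eE$ to the composition of a group homomorphism and a translation, once a base point is chosen. First I would fix any point $O \in \eE(\FF)$ and endow $\eE$ with the classical abelian group structure $(\eE,+,O)$ arising from the chord-tangent law with $O$ as identity; the heap $H(\eE)$ is then retracted by that group, so that $[A,B,C] = A - B + C$ and $-A = [O,A,O]$. In this way the heap operation is intrinsically described in terms of $+$, which is the form in which it will be easiest to check preservation.

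Next I would invoke the standard decomposition for morphisms of elliptic curves: every morphism $f : \eE \to \eE$ factors as $f = \tau_P \circ \phi$, where $P = f(O)$, $\tau_P$ is translation by $P$, and $\phi := \tau_{-P} \circ f$ sends $O$ to $O$ and is therefore an isogeny, hence a group homomorphism with respect to $+$ (see, for instance, \cite[Chapter~III]{Sil:ari}). This is the general-field substitute for Lemma~\ref{lem.hol}. With the decomposition in hand, the heap-homomorphism property of $f$ becomes a one-line calculation,
\[
f([A,B,C]) = \phi(A-B+C) + P = \phi(A) - \phi(B) + \phi(C) + P = [f(A),f(B),f(C)],
\]
the three copies of $P$ cancelling in the alternating combination on the right. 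Since this applies to an arbitrary morphism $f$, every endomorphism of $\eE$ preserves $[-,-,-]$.

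Given this, the truss structure on $T(\eE)$ follows exactly as in the proof of Theorem~\ref{thm.main}. Closure under composition is clear; the pointwise prescription $[f,g,h](A) := [f(A),g(A),h(A)]$ turns $T(\eE)$ into an abelian heap by inheritance from $H(\eE)$; right distributivity of composition over $[-,-,-]$ is immediate from the pointwise formula; and left distributivity is precisely the heap-homomorphism property established in the previous step. The main obstacle is therefore the appeal to the decomposition theorem, which is the substantive piece of algebraic geometry feeding the argument (resting ultimately on rigidity for complete varieties); perfectness of $\FF$ enters only through the standard machinery that treats $\eE$ as a smooth group variety over $\FF$ and justifies carrying the argument out on $\FF$-rational points. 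Once that is granted, the rest of the proof is purely formal and strictly parallel to the complex case.
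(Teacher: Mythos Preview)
Your proof is correct and follows essentially the same route as the paper: fix a rational point $O$, retract $H(\eE)$ to the group $G(\eE;O)$ so that $[A,B,C]=A-B+C$, decompose an arbitrary endomorphism $f$ as translation-by-$f(O)$ composed with an isogeny $\phi$ (citing the same result from \cite[Chapter~III]{Sil:ari}), and then verify the heap-homomorphism identity directly. The only difference is that you spell out the cancellation $[f(A),f(B),f(C)]=\phi(A)-\phi(B)+\phi(C)+P$ explicitly, whereas the paper leaves this as a routine check.
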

\begin{proof}
We can retract  $H(\eE)$ to a group $G(\eE;O)$ at any rational point $O\in \eE(\FF)$, so that $A+B = [A,O,B]$. Any endomorphism $f$ of $\eE$ is an isogeny $\phi$ of $\eE$ with respect to $O$, that is an endomorphism of $G(\eE;O)$, combined with the translation by $f(O)$ (see e.g.\ \cite[Chapter III~Example~4.7]{Sil:ari}). The isogeny $\phi$ is fully determined by $f$. Explicitly,
$$
f(A) = [\phi(A),O,f(O)] =\phi(A)+f(O), \qquad \phi(A) = [f(A),f(O),O]=f(A)-f(O).
$$
Since $[A,B,C] = A-B+C$, and $\phi$ is an endomorphism of $G(\eE;O)$ one easily checks that $f$ is a heap endomorphism.
\end{proof}

Similarly to the complex curve case the truss $T(\eE)$ can be interpreted as a crossed product. The group $G(\eE;O)$ is a left module over the ring $R(\eE;O)$ of all isogenies of $\eE$ at $O$ by evaluation, $(\phi,A)\mapsto \phi(A)$.  Hence $R(\eE;O)\times \eE$ is a truss with the Cartesian product heap operation and multiplication
$$
(\phi, A)(\psi, B) = (\phi\circ \psi, [A,O,\phi(B)])
$$
isomorphic with $T(\eE)$ by the map 
$$
R(\eE;O)\times \eE \longrightarrow T(\eE), \qquad  (\phi,A)\longmapsto \Big[B\mapsto [\phi(B),O,A]\Big].
$$
The product in the  ring associated to $R(\eE;O)\times \eE$  at the point $(c_O:A\mapsto O, O)$ as in Remark~\ref{rem.ring} comes out as
$$
(\phi, A)\bullet (\psi, B) = (\phi\circ \psi, \phi(B)).
$$

\section*{Acknowledgements} 
I would like to thank Laiachi El Kaoutit first for raising the question about the additive structure of elliptic curves during the conference of {\em Hopf algebras, monoidal categories and related topics}, Bucharest, July 27--29, 2022, and second for the subsequent comments and suggestions.  I would also like to thank Simion Breaz and Bernard Rybo\l owicz for their comments and suggestions.

This research is partially supported by the National Science Centre, Poland, grant no. 2019/35/B/ST1/01115.

\end{document}